\theoremstyle{plain}
\newtheorem{thm}{Theorem}[section]
\newtheorem{prop}[thm]{Proposition}
\newtheorem{lem}[thm]{Lemma}
\newtheorem{cor}[thm]{Corollary}
\newtheorem{remark}[thm]{Remark}
\theoremstyle{definition}
\theoremstyle{remark}
\newtheorem{rmk}[thm]{Remark}
\newcommand{\shgx}{\Sh(\mathbf G, \mathbf X)}
\newcommand{\ul}{^{(\ell)}}
\newcommand{\uls}{^{(\ell^2)}}
\newcommand{\gofa}{\mathbf G(\mathbf A)}
\newcommand{\qlbar}{\overline{\mathbf Q}_{\ell}}
\newcommand{\glnqlbar}{\GL(n, \qlbar)}
\newcommand{\galf}{ \Gal (\overline{F}/F)}
\newcommand{\rlipi}{R_{\ell, \iota}(\pi)}
\newcommand{\rec}{\rm{rec}}
\newcommand{\sesi}{^{\rm{ss}}}
\newcommand{\Th}{{\rm Th.}}
\newcommand{\Cor}{{\rm Cor.}}
\newcommand{\ord}{{\operatorname{ord }}}
\newcommand{\Sh}{{\operatorname{Sh }}}
\newcommand{\GL}{{\operatorname{GL}}}
\newcommand{\GU}{{\operatorname{GU}}}
\newcommand{\Gal}{{\operatorname{Gal}}}
\newcommand{\calH}{{\mathcal{H}}}
\def\2vector#1#2{\left( \begin{smallmatrix} #1 \\ #2 \end{smallmatrix}
\right)}
\def\deb{ \begin{equation} }
\def\fin{ \end{equation} }
\definecolor{Indigo}{rgb}{0.2,0.1,0.7}
\definecolor{Violet}{rgb}{0.5,0.1,0.7}
\definecolor{White}{rgb}{1,1,1}
\definecolor{Green}{rgb}{0.1,0.9,0.2}
\newcommand{\loccit}{{\em loc. cit.}}
\begin{document}

\title{A $\mu$-ordinary Hasse invariant}
\date{\today}
\author{Wushi Goldring, Marc-Hubert Nicole}

\begin{abstract} We construct a generalization of the Hasse invariant for certain unitary Shimura varieties of PEL type whose vanishing locus is the complement of the so-called $\mu$-ordinary locus. We show that the $\mu$-ordinary locus of those varieties is affine. As an application, we strengthen a special case of a theorem of one of us (W.G.) on the association of Galois representations to automorphic representations of unitary groups whose archimedean component is a holomorphic limit of discrete series.

\end{abstract}

\address{W. G. Princeton University, Fine Hall, Washington Road, Princeton, NJ 08544-1000, USA}
\email{wushijig@gmail.com}

\address{M.-H. N.  Institut de mathŽmatiques de Luminy, UniversitŽ d'Aix-Marseille, campus de Luminy, case 907, 13288 Marseille cedex 9, France}
\email{nicole@iml.univ-mrs.fr}

\subjclass[2010]{Primary 14G35 ; Secondary 11F33, 11F55.}
\keywords{Hasse invariant, Galois representations, Shimura varieties, $\mu$-ordinary locus}
\maketitle

\section{Introduction}
Starting with the cornerstone work of Deligne-Serre \cite{DelSerre} on classical weight one modular forms, the Hasse invariant has been a fundamental tool for constructing congruences between automorphic forms. In turn, the congruences that arise from the Hasse invariant have been used to construct automorphic Galois representations that do not appear directly in the \'etale cohomology of Shimura varieties (\cite{TaySp4}, \cite{WushiGalrepshldsI}). One limitation of the Hasse invariant is that there exist many Shimura varieties for which the Hasse invariant is identically zero. This happens precisely for those Shimura varieties whose ordinary locus is empty.

The $\mu$-ordinary locus introduced by Rapoport and Richartz \cite{RapRic} can be viewed as a substitute to the ordinary locus when the latter is empty. Indeed a theorem of Wedhorn states that, for a prime of good reduction and hyperspecial level, the $\mu$-ordinary locus is dense \cite[\Th 1.6.2]{Wed}. It is therefore natural to seek a generalization of the Hasse invariant whose vanishing locus is the complement of the $\mu$-ordinary locus. We construct such an invariant for Shimura varieties ${\rm Sh}(\mathbf G, \mathbf X)$ of PEL-type such that $\mathbf G(\mathbf R)$ is isomorphic to a unitary similitude group $ \GU(a,b)$ for some positive integers $a,b$. This class includes Picard modular varieties.

\subsection{Main Result} \label{sec main result}
Suppose ${\mathcal U}=(B, V,* <,>, \tilde{h})$ is a Kottwitz datum, with associated Shimura variety $\shgx$  (see \cite[\S3.1]{WushiGalrepshldsI}) such that the center of the simple $\mathbf Q$-algebra $B$ is an imaginary quadratic field $F$. Let $\ell$ be a prime of good reduction for $\mathcal U$ (see \loccit\ \S3.3) and $\mathcal K \ul \subset \mathbf G(\mathbf A_f^{\ell})$ an open compact subgroup. Let $Sh_{\mathcal K \ul, \ell}$ be the Kottwitz integral model of $Sh(\mathbf G, \mathbf X)$ at level $\mathcal K\ul$ (see \loccit\ \S3.4]). Let $E=E(\mathbf G, \mathbf X)$ be the reflex field of $\shgx$ and $\lambda$ a prime of $E$ lying above $\ell$. Denote by $sh_{\mathcal K\ul, \lambda}$ the special fiber of $Sh_{\mathcal K\ul, \ell}$ at $\lambda$ .

\begin{thm}[$\mu$-ordinary Hasse invariant] There exists an automorphic line bundle $^\mu \! \omega_{\mathcal K\ul}$, and a section $^\mu \! H \in H^0(sh_{\mathcal K\ul,\lambda}, ^\mu \!\omega^{\otimes(\ell^{2}-1)}_{\mathcal K\ul})$ such that:
\begin{enumerate}
\item[($\mu$-Ha1)]
The non-vanishing locus of $^\mu \! H$ is the $\mu$-ordinary locus of $sh_{\mathcal K\ul,\lambda}$.
\item[($\mu$-Ha2)]
There exists an integer $m \in \mathbf N$ such that $(^\mu \! H)^m$ lifts to characteristic zero.
\item[($\mu$-Ha3)]
The construction of $^\mu \! H$ is compatible with varying the level $\mathcal K\ul$.

\end{enumerate}

\label{th mu ordinary hasse invariant}\end{thm}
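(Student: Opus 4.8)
The plan is to construct the $\mu$-ordinary Hasse invariant by reduction to a group-theoretic / linear-algebra situation at $\ell$, exploiting the structure theory of $p$-divisible groups with PEL structure in the unitary $\GU(a,b)$ case.

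First I would recall from Wedhorn and Moonen the classification of the $\ell$-divisible group with $\calO_F$-action at a geometric point of $sh_{\mathcal K\ul,\lambda}$: since $\ell$ is split in $F$ (good reduction, imaginary quadratic $F$), the $\calO_F\otimes\Z_\ell$-module structure splits the Dieudonn\'e module into two pieces interchanged by Verlauting/duality, reducing the data to a single $F$-isocrystal of height $a+b$ with a fixed slope decomposition. The $\mu$-ordinary locus is precisely where the Newton polygon is the generic (lowest, $\mu$-ordinary) one; by Wedhorn's density theorem this is open dense. The idea, following Moonen's description of the $\mu$-ordinary $p$-divisible group as a fixed isoclinic-up-to-a-unit-root-part object, is that over the $\mu$-ordinary locus there is a canonical filtration (the slope filtration) of the Dieudonn\'e crystal whose graded pieces are, Frobenius-twistwise, line bundles. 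One then builds $^\mu\!\omega_{\mathcal K\ul}$ as an explicit tensor combination of the Hodge-filtration line bundles $\uomega$ (or their determinants along the two $\calO_F$-eigenspaces) dictated by the $\mu$-ordinary slope datum, and defines $^\mu\!H$ as the obstruction to the canonical slope filtration specializing correctly — concretely, as the image of a canonical section under the $(\ell^2-1)$-st power of a Hasse-type operator built from $V^{a+b}$ (or the relevant power of Frobenius), the exponent $\ell^2-1$ coming from the residue field $\F_{\ell^2}$ of the reflex field over which the relevant eigenspace line bundle becomes defined. Since $\Fr$ acts on a line bundle in characteristic $\ell$ over $\F_{\ell^2}$ by raising to the $\ell^2$ power on sections, the natural Hasse operator is a section of the $(\ell^2-1)$-st tensor power, which matches the statement.

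The three properties would then be checked as follows. For ($\mu$-Ha1): the canonical section defining $^\mu\!H$ is nonzero exactly when the slope filtration on the special fiber has the $\mu$-ordinary graded type, which by the Grothendieck specialization theorem and Wedhorn's/Moonen's description is exactly the $\mu$-ordinary locus; this is a pointwise computation on Dieudonn\'e modules over $\Fp$, analogous to the fact that the classical Hasse invariant is nonzero iff $V$ is bijective on the unit-root part. For ($\mu$-Ha2): I would argue that a suitable power lifts because $^\mu\!H$ can be realized (after raising to a power clearing denominators and Frobenius-twist discrepancies) as the reduction of the Hasse invariant, or a product of partial Hasse invariants, attached to a unitary or symplectic group over a totally real or CM field into which our situation embeds — i.e. use a functoriality/embedding of Shimura data where the classical ordinary locus is nonempty, so the classical Hasse invariant there lifts by its $q$-expansion/Serre-Tate deformation argument, and pull back. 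Alternatively, one lifts the determinant-of-unit-root-subcrystal construction to a formal neighborhood using Serre-Tate coordinates on the $\mu$-ordinary locus, which carries a canonical lift. For ($\mu$-Ha3): compatibility with level is immediate since both $^\mu\!\omega_{\mathcal K\ul}$ and $^\mu\!H$ are constructed from the universal $\ell$-divisible group, which is pulled back under the finite \'etale transition maps $sh_{\mathcal K'\ul}\to sh_{\mathcal K\ul}$.

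The main obstacle I expect is ($\mu$-Ha2), the lifting to characteristic zero: unlike the ordinary case there is no immediate Serre-Tate canonical lift argument because the $\mu$-ordinary locus is not an ordinary locus for any natural auxiliary structure, and Frobenius-twist discrepancies in the graded pieces of the slope filtration mean that $^\mu\!H$ itself need not lift — only some power does, and pinning down (or avoiding pinning down) the exponent $m$ requires care. The cleanest route is probably to exhibit $^\mu\!H^m$, for suitable $m$, as a polynomial in the classical partial Hasse invariants of an auxiliary Hilbert--Blumenthal or larger unitary Shimura variety that dominates $sh_{\mathcal K\ul,\lambda}$ and has nonempty ordinary locus, thereby importing the lifting for free; verifying that such an auxiliary variety and the requisite identity of invariants exist in the $\GU(a,b)$ generality is the technical heart of the argument. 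A secondary subtlety is checking that $^\mu\!\omega_{\mathcal K\ul}$ is genuinely an automorphic line bundle (descends from a representation of the standard Levi), which amounts to identifying the $\mu$-ordinary slope datum with a genuine character of the Levi — routine once Moonen's description is in hand, but it must be done to justify the terminology in the statement.
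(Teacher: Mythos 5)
Your setup starts from the wrong case. You write that ``$\ell$ is split in $F$'' as a consequence of good reduction, but good reduction for $\mathcal U$ only forces $\ell$ to be \emph{unramified} in $F$. The split case is precisely the case where the $\mu$-ordinary locus coincides with the ordinary locus, so the classical Hasse invariant already works and there is nothing new to do; the paper explicitly disposes of it in one sentence and then restricts attention to $a<b$ with $\lambda$ \emph{inert} in $E$. All the content of the construction lives in the inert case, and your proposal never engages with it. In particular the inert-ness is what forces the key phenomenon that the paper's construction exploits: Verschiebung does not preserve the two $\calO_F$-eigenspaces $\Omega_{\mathcal K\ul,a}$ and $\Omega_{\mathcal K\ul,b}$ of the Hodge bundle but \emph{interchanges} them, ${\rm Ver}^*\colon \Omega_{\mathcal K\ul,a}\to\Omega_{\mathcal K\ul,b}^{(\ell)}$ and ${\rm Ver}^*\colon\Omega_{\mathcal K\ul,b}\to\Omega_{\mathcal K\ul,a}^{(\ell)}$, which is why one must compose Verschiebung with itself before one gets an endomorphism-like map $({\rm Ver}^*)^2\colon\Omega_{\mathcal K\ul,a}\to\Omega_{\mathcal K\ul,a}^{(\ell^2)}$ whose determinant gives a section of $\omega_{\mathcal K\ul,a}^{\otimes(\ell^2-1)}$. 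That, not a slope-filtration obstruction, is the definition of $^\mu\!H$; the line bundle $^\mu\!\omega_{\mathcal K\ul}$ is simply the determinant of the rank-$a$ piece of the Hodge bundle. Your identification of the exponent $\ell^2-1$ with the residue field $\F_{\ell^2}$ is pointing in the right direction, but you never isolate the $a$-eigenspace or the squaring of Verschiebung, which are the actual mechanism.

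The verification of ($\mu$-Ha1) in the paper is also more elementary than what you sketch: it is a rank count, not a slope-filtration argument. One shows that the $\mu$-ordinary locus equals the maximal $\ell$-rank stratum, that this maximal $\ell$-rank is $2ar$, and that $^\mu\!H(A)\neq 0$ iff both restrictions of ${\rm Ver}^*$ to the $a$- and $b$-eigenspaces have full rank $a$, which (using the Mumford semisimple-rank criterion for $\ell$-rank and the bound that each restriction has rank at most $a$) is equivalent to $\ell$-rank $2ar$. For ($\mu$-Ha2) your proposed route — realizing $^\mu\!H^m$ as a pullback of a classical Hasse invariant from an auxiliary Shimura variety with nonempty ordinary locus, or invoking a Serre--Tate canonical lift — is speculative and not what is needed; the paper instead cites the Lan--Suh result that a positive power of $\omega_{\mathcal K\ul,a}$ extends to an ample line bundle on the minimal compactification, after which the existence of a characteristic-zero lift of some power follows from the standard cohomological argument (Koecher principle plus vanishing of higher cohomology of high ample powers). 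That sidesteps entirely the ``technical heart'' you flag. Your treatment of ($\mu$-Ha3) via pullback along transition maps is fine and matches the paper.
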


\begin{cor} \label{cor affine}
The $\mu$-ordinary locus $sh_{\mathcal K\ul,\lambda}^{\mu-{\rm ord,min}}$ in the minimal compactification $sh_{\mathcal K\ul,\lambda}^{{\rm min}}$ is affine.
\end{cor}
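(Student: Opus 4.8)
The plan is to deduce Corollary \ref{cor affine} from Theorem \ref{th mu ordinary hasse invariant} by the standard argument that the non-vanishing locus of a section of an ample line bundle on a projective variety is affine. First I would pass to the minimal compactification $sh_{\mathcal K\ul,\lambda}^{\min}$ and extend the $\mu$-ordinary Hasse invariant $^\mu\!H$, together with its supporting line bundle $^\mu\!\omega_{\mathcal K\ul}$, to this compactification. The natural candidate for the extension of the line bundle is the one coming from the Hodge bundle (or an appropriate power/determinant thereof): on the minimal compactification this descends to an \emph{ample} line bundle, as is classical for PEL Shimura varieties (following Chai--Faltings and its extensions in the PEL case). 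I would invoke the Koecher principle to extend the section $^\mu\!H$ across the boundary; alternatively, the compatibility statement ($\mu$-Ha3) plus the construction on toroidal compactifications gives the extension by the usual argument.

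Next I would identify the non-vanishing locus of the extended section. By ($\mu$-Ha1), on the open Shimura variety $sh_{\mathcal K\ul,\lambda}$ the non-vanishing locus of $^\mu\!H$ is exactly the $\mu$-ordinary locus $sh_{\mathcal K\ul,\lambda}^{\mu\text{-ord}}$. One then needs that the boundary of the minimal compactification is contained in the $\mu$-ordinary locus of $sh_{\mathcal K\ul,\lambda}^{\min}$, equivalently that $^\mu\!H$ does not vanish identically on any boundary component and, more precisely, is nowhere vanishing there; this is typically seen by a boundary-charts computation showing the abelian part degenerating at the cusps is itself $\mu$-ordinary (the semi-abelian schemes over the cusps have toric part plus a $\mu$-ordinary abelian part). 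Thus the non-vanishing locus of $^\mu\!H$ on $sh_{\mathcal K\ul,\lambda}^{\min}$ is precisely $sh_{\mathcal K\ul,\lambda}^{\mu\text{-ord,min}}$.

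Finally, since $^\mu\!H$ is (after raising to the power $\ell^2-1$, and possibly a further power to ensure ampleness of $^\mu\!\omega_{\mathcal K\ul}$ itself rather than merely semi-ampleness) a section of an ample line bundle on the projective variety $sh_{\mathcal K\ul,\lambda}^{\min}$, its non-vanishing locus is affine by \cite[Ch.~II, Prop.~7.2 or the standard consequence of ampleness]{Hartshorne}-type reasoning: the complement of the zero scheme of a section of an ample line bundle on a projective scheme is affine. Applying this to $^\mu\!H$ gives that $sh_{\mathcal K\ul,\lambda}^{\mu\text{-ord,min}}$ is affine, which is the assertion.

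I expect the main obstacle to be the two extension issues at the boundary: (i) establishing that the relevant extension of $^\mu\!\omega_{\mathcal K\ul}$ to the minimal compactification is genuinely ample (one may need to twist by an ample bundle coming from the usual Hodge line bundle, or argue semi-ampleness plus the Nakai--Moishezon-type criterion), and (ii) controlling the behavior of $^\mu\!H$ along the boundary strata, i.e. verifying that the cusps lie in the $\mu$-ordinary locus so that the non-vanishing locus on the compactification is exactly $sh_{\mathcal K\ul,\lambda}^{\mu\text{-ord,min}}$ and not something larger. Both are by now routine in the PEL setting but require invoking the appropriate compactification theory; the ordinary-Hasse-invariant case is the model to follow.
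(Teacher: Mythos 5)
Your proposal is correct and takes essentially the same approach as the paper: the paper's proof is the single sentence that the non-vanishing locus of a section of an ample line bundle on a projective scheme is affine, the needed ingredients (ampleness of the extension of $\omega_{\mathcal K\ul, a}$ to $sh_{\mathcal K\ul,\lambda}^{\rm min}$ via Lan--Suh, and extension of the section via the Koecher principle) having already been invoked in the proof of ($\mu$-Ha2). The additional scrutiny you apply to the boundary --- verifying that the non-vanishing locus of the extended section is precisely $sh_{\mathcal K\ul,\lambda}^{\mu\text{-ord,min}}$ --- is a real point that the paper leaves implicit, but it does not alter the method.
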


\begin{remark} {\rm We do not know the minimal value of $m$ in ($\mu$-Ha2). The Hasse invariant of Siegel varieties lifts i.e., $m=1$, see \cite{BocNag}.}
\end{remark}

\subsection{Application.}
Let $\mathcal U$ be as in \S\ref{sec main result}.
Suppose $\pi$ is a cuspidal automorphic representation of $\gofa$ with $p$-adic component $\pi_p$ for every (rational) prime $p$. Given a (rational) prime $\ell$, let ${\mathcal P}\ul$ be the set of (rational) primes $p$ different from $\ell$ such that $\pi_p$ is unramified and $\mathbf G$ is unramified at $p$. Let $\frak P \ul$ be the set of primes of $F$ that are split and lie over some $p \in {\mathcal P}\ul$.

Assume $\wp \in \frak P\ul$. One has a decomposition $\mathbf G(\mathbf Q_p) \cong \mathbf Q_p^{\times} \times \GL(n, F_{\frak \wp})$, where $n$ is given by $n^2=\dim_F{\rm End}_B V$. Write $\pi_p \cong \chi_p \otimes \pi_{\wp}$, with $\chi_p$ a character of $\mathbf Q_p^{\times}$ and $\pi_{\wp}$ a representation of $\GL(n, F_{\wp})$.

Our result on Galois representations is:
\begin{thm}
Suppose $\pi$ is a cuspidal automorphic representation of $\mathbf G(\mathbf A)$ whose archimedean component $\pi_{\infty}$ is an $\mathbf X$-holomorphic limit of discrete series representation of $\mathbf G( \mathbf R)$ (see {\rm \cite[\S2.3]{WushiGalrepshldsI}}). Assume $\ell$ is a prime (of $\mathbf Q$) of good reduction for $\mathcal U$.
Then there exists a unique semisimple Galois representation
\begin{equation}\rlipi:\galf \longrightarrow  \glnqlbar\label{eq gal rep lds} \end{equation}
satisfying the following two conditions:
\begin{enumerate} \item[(Gal1)]
If $p \in \mathcal P\ul$ and $\wp$ is a prime of $F$ dividing $p$ then $\rlipi$ is unramified at $\wp$. In particular $\rlipi$ is unramified at all but finitely many places.

\item[(Gal2)]  If $\wp \in \frak P\ul$ then there is an isomorphism of Weil-Deligne representations
 \begin{equation} (\rlipi | _{W_{F_{\wp}}})^{\sesi}\cong \iota^{-1}\rec(\pi_{\wp} \otimes |\cdot|_{\wp}^{\frac{1-n}{2}}), \end{equation} where $W_{F_{\wp}}$ is the Weil group of $F_{\wp}$, the superscript $\sesi$ denotes semi-simplification and $\rec$ is the Local Langlands Correspondence, normalized as in Harris-Taylor {\rm \cite{HT}}.
 \end{enumerate}
\label{th main unitary 1}
 \end{thm}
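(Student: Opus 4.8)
The strategy is to bootstrap from the author's earlier theorem (the version in \cite{WushiGalrepshldsI}), which already attaches $\rlipi$ to such $\pi$ but presumably under a more restrictive hypothesis on $\ell$, or with local-global compatibility only away from a larger exceptional set. I would first recall that construction: one uses the Hasse invariant to produce, from the weight-one-like automorphic form underlying $\pi$, a congruence to forms of cohomological weight; these latter contribute to the \'etale cohomology of $\shgx$, hence carry Galois representations by the work of Kottwitz, Harris--Taylor, Shin, Morel, etc.; then one takes an $\ell$-adic limit and semisimplifies. The new input here is the $\mu$-ordinary Hasse invariant $^\mu\!H$ of Theorem \ref{th mu ordinary hasse invariant}: because its vanishing locus is the complement of the \emph{$\mu$-ordinary} locus rather than the ordinary locus, and because by Wedhorn's theorem the $\mu$-ordinary locus is dense at \emph{every} prime of good reduction (not only those where the ordinary locus is nonempty), multiplication by a suitable power $(^\mu\!H)^m$ stays injective on global sections and still lifts to characteristic zero by ($\mu$-Ha2). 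This removes the hypothesis that forced the ordinary locus to be nonempty, and hence lets us take any $\ell$ of good reduction for $\mathcal U$.

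Concretely I would proceed in the following steps. \textbf{Step 1.} Reduce to the case where $\pi$ has regular infinitesimal character by twisting: the $\mathbf X$-holomorphic limit of discrete series condition means the Harish-Chandra parameter is nonregular in exactly the coordinates allowed by $\mathbf X$, and one arranges a classical form of the right weight. \textbf{Step 2.} Realize the $p$-adic modular form associated to $\pi$ as a section over $sh_{\mathcal K\ul,\lambda}$ of the relevant automorphic bundle, and multiply by $(^\mu\!H)^{N}$ for $N$ large and divisible by $m$; by ($\mu$-Ha1) this does not change the form on the $\mu$-ordinary locus, which is dense, so no information is lost, and by ($\mu$-Ha2) the product lifts to a characteristic-zero form $\tilde F$ of cohomological weight. \textbf{Step 3.} Decompose $\tilde F$ (or rather the automorphic representation it generates, after possibly congruence-ing again) into a sum of automorphic representations $\pi'$ of cohomological type whose Satake parameters at the primes in $\mathcal P\ul$ agree with those of $\pi$ modulo $\ell^k$; attach to each $\pi'$ its Galois representation from the cohomology of $\shgx$ (Kottwitz's point count and the stabilized trace formula, as in \cite{WushiGalrepshldsI}). \textbf{Step 4.} Pass to the limit over $k$ in the space of $n$-dimensional pseudorepresentations of $\galf$, obtain a pseudorepresentation, and take the associated semisimple representation $\rlipi$; its unramifiedness at primes over $\mathcal P\ul$ (i.e.\ (Gal1)) is inherited from the congruences, and (Gal2) at split primes $\wp\in\frak P\ul$ follows because local-global compatibility at unramified split places is an equality of Satake/Frobenius eigenvalues, which is preserved under the congruence and the limit. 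Uniqueness is immediate since a semisimple $\ell$-adic representation of $\galf$ is determined by its traces of Frobenius at a density-one set of places, and (Gal1)--(Gal2) pin these down.

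\textbf{Main obstacle.} The delicate point is Step 2--3: ensuring that after multiplying by $(^\mu\!H)^N$ and lifting, the resulting characteristic-zero form genuinely decomposes into \emph{cuspidal} (or at least cohomologically nontrivial) pieces whose Galois representations are available, and that the weight of the lift is regular enough to land in the range where the Galois representations of Harris--Taylor/Shin/Morel apply with the correct local-global compatibility at the split unramified places. In the ordinary case this is handled by the classical Hasse invariant's good properties on $q$-expansions; here one must instead control the $\mu$-ordinary Hasse invariant's effect, which is why the compatibility with level ($\mu$-Ha3) and the explicit structure of $^\mu\!\omega_{\mathcal K\ul}$ are needed. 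A secondary technical issue is that $m$ in ($\mu$-Ha2) is not known to be $1$, so one cannot lift $^\mu\!H$ itself but only a power; one must check that working with $(^\mu\!H)^{m}$ throughout (which changes the weight by a factor of $m(\ell^2-1)$) causes no loss, which it does not since the vanishing locus of a power is the same as that of the original section.
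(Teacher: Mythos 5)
Your overall route is the same as the paper's: bootstrap the construction from \cite{WushiGalrepshldsI}, with the $\mu$-ordinary Hasse invariant $^\mu\!H$ of \Th~\ref{th mu ordinary hasse invariant} replacing the classical Hasse invariant, the density of the $\mu$-ordinary locus replacing density of the ordinary locus, and consequently no splitting assumption needed on $\lambda$. The paper's proof is terse: it sets up the integral model $Sh_{\mathcal K,\ell}$ as the normalization of $Sh_{\mathcal K\ul,\ell}$ in $Sh_{\mathcal K,E}$, pulls back a lift of a power of $^\mu\!H$ along $\pi: Sh_{\mathcal K,\ell}\to Sh_{\mathcal K\ul,\ell}$, proves the Hecke-equivariance congruence $T((^\mu\!H_{\mathcal K}^{\rm lift})^{\ell^j}f)\equiv(^\mu\!H_{\mathcal K}^{\rm lift})^{\ell^j}T(f)\pmod{\lambda^{j+1}}$ as \Th~\ref{th mu hasse commutes with hecke}, and then declares the remainder identical to \S\S6.3--6.4 of {\it loc.\ cit.} Your Steps 2--4 are a reasonable reconstruction of that deferred content (congruence to cohomological weight, Galois representations from cohomology, limit of pseudorepresentations, trace-determined uniqueness).

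However, your Step 1 is a genuine misstep. You cannot ``reduce to the case where $\pi$ has regular infinitesimal character by twisting'': the hypothesis that $\pi_\infty$ is an $\mathbf X$-holomorphic \emph{limit} of discrete series means its Harish--Chandra parameter sits on a wall, and there is no twist that moves it off the wall while preserving automorphy. If such a twist existed, the Galois representation would already be available from the \'etale cohomology of $\shgx$ and the entire Hasse-invariant machinery would be unnecessary; the whole point of the theorem is to reach irregular weight. What actually happens (and what you should have said) is that $\pi$ is realized as a degree-zero coherent cohomology class of the correct, \emph{irregular} weight, and the multiplication-by-$(^\mu\!H)^{\ell^j}$ step is precisely what produces a congruence to a regular/cohomological weight. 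Relatedly, you relegate the Hecke-equivariance-modulo-$\lambda^{j+1}$ statement (\Th~\ref{th mu hasse commutes with hecke} in the paper, which follows from ($\mu$-Ha1), ($\mu$-Ha3), and Wedhorn density) to a ``secondary technical issue'' about $q$-expansions, but it is in fact the central lemma that makes the congruence argument work: the Deligne--Serre limit requires precisely this compatibility of $^\mu\!H$ with the Hecke action modulo increasing powers of $\lambda$, and it is the only new ingredient the paper has to supply before deferring the rest to \cite{WushiGalrepshldsI}.
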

\begin{remark} Comparison with {\rm \cite[\Th 1.2.1]{WushiGalrepshldsI}. The improvement in \Th~\ref{th main unitary 1} with respect to {\it loc. cit.} is the removal of the assumption that some prime $\lambda$ of $E$ above $\ell$ is split in $E$.  }\end{remark}

\section{Construction of the $\mu$-ordinary Hasse invariant}
Assume henceforth, without loss of generality, that $a \leq b$. The assumption that $\ell$ is a prime of good reduction for $\mathcal U$ implies that $\ell$ is unramified in $E$.

If $\lambda$ is split in $E$, \Th~\ref{th mu ordinary hasse invariant} is well-known (see e.g., \cite[\S4]{WushiGalrepshldsI}). If $a=b$ then $E=\mathbf Q$, so $\lambda$ is necessarily split in $E$. Hence we assume from now on that $a<b$ and that $\lambda$ is inert in $E$.


As in \cite[\S3.7]{WushiGalrepshldsI}, the Hodge bundle $\Omega_{\mathcal K\ul}$ decomposes as \begin{equation} \Omega_{\mathcal K\ul}=\Omega_{\mathcal K\ul, a}^{\oplus r} \oplus \Omega_{\mathcal K\ul, b}^{\oplus r} \label{eq hodge bundle decomposition}, \end{equation} where $\Omega_{\mathcal K\ul, a}$ (resp. $\Omega_{\mathcal K\ul, b}$) has rank $a$ (resp. $b$) and $r$ is the rank of $B$ over $F$. Let $\omega_{\mathcal K \ul, a}$ (resp. $\omega_{\mathcal K\ul, b}$) be the determinant of $\Omega_{\mathcal K\ul, a}$ (resp. $\Omega_{\mathcal K \ul, b}$).

Let $\mathcal A$ be an abelian scheme representing the universal isogeny class above $sh_{\mathcal K\ul, \lambda}$
As in (4.6) of \loccit\ , the Verschiebung ${\rm Ver:} \ \mathcal A\ul \rightarrow A$ induces a map \begin{equation} {\rm Ver}^*: \Omega_{\mathcal K\ul}  \rightarrow \Omega_{\mathcal K\ul}\ul \label{eq ver hodge bundle}. \end{equation} Since $\lambda$ is inert, the restrictions of ${\rm Ver}^*$ to $\Omega_{\mathcal K\ul,a}$ (resp. $\Omega_{\mathcal K\ul,b}$) have the form \begin{equation} {\rm Ver}^*_{| \Omega_{\mathcal K\ul,a}}:\Omega_{\mathcal K\ul,a} \longrightarrow \Omega_{\mathcal K\ul,b}^{\ell} \mbox{ and } {\rm Ver}^*_{| \Omega_{\mathcal K\ul,b}}:\Omega_{\mathcal K\ul,b} \longrightarrow \Omega_{\mathcal K\ul,a}^{\ell} \label{eq ver permutes a b} . \end{equation}

Therefore, if $({\rm Ver}^*)^2$ denotes the composite of ${\rm Ver}^*$ with itself, then we have \begin{equation} ({\rm Ver}^*)^2_{| \Omega_{\mathcal K\ul, a}}: \Omega_{\mathcal K, a} \longrightarrow \Omega_{\mathcal K, a}\uls \label{eq ver2}.  \end{equation}


Let \begin{equation}  ^{\mu}\!h(\mathcal A): \omega_{\mathcal K\ul, a} \longrightarrow \omega_{\mathcal K\ul, a} ^{\otimes (\ell^{2})} \label{eq mu hasse map}, \end{equation} be the top exterior power of that map, where we have used that $\omega_{\mathcal K\ul, a}^{(\ell^{2})}= \omega_{\mathcal K\ul, a} ^{\otimes (\ell^{2})}$ since $\omega_{\mathcal K\ul, a}$ is a line bundle. The map $^{\mu}\!h(\mathcal A)$ induces a global section \begin{equation} ^{\mu}\!H(\mathcal A) \in H^0(sh_{\mathcal K\ul, \lambda}, \omega_{\mathcal K\ul,a}^{\otimes(\ell^{2}-1)}) \label{eq mu hasse invariant}. \end{equation}

If $\mathcal B$ is another representative of the universal isogeny class above $sh_{\mathcal K\ul, \lambda}$ and $\varphi: \mathcal A \rightarrow \mathcal B$ is an isogeny compatible with the endomorphism actions of $\mathcal A, \mathcal B$, then as in \cite[\S4.2]{WushiGalrepshldsI}, the compatibility of Verschiebung with isogenies (Lemma 4.2.3 of \loccit) implies that $\varphi^*(^\mu\!H(\mathcal B))= {^\mu\!H}(\mathcal A)$. Hence we may omit reference to the representatives $\mathcal A$ or $\mathcal B$ and we have a section $^\mu \!H \in H^0(sh_{\mathcal K \ul, \lambda}, \omega_{\mathcal K\ul,a}^{\otimes(\ell^{2}-1)})$, which we call the $\mu$-{ordinary Hasse invariant}.

\section{Proofs.}
We begin with the proof of \Th~\ref{th mu ordinary hasse invariant}. The following two lemmas and their corollaries will establish that $^\mu\!H$ satisfies ($\mu$-Ha1).

\begin{lem} \label{lemma simpleNP}
 The Newton polygon $\mathcal N^{\rm ord}$ of the underlying isogeny class of abelian schemes of a $\mu$-ordinary geometric point of $sh_{\mathcal K\ul, \lambda}$ has the following slopes:
 \[
\left. \begin{array}{l} 0 \\ 1/2 \\ 1 \end{array}
\right\} \text{with multiplicity } \left\{ \begin{array}{r} 2ar \\ 2(b-a)r \\ 2ar \end{array} \right.
\]

\end{lem}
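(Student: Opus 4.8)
The plan is to compute the Newton polygon of a $\mu$-ordinary point directly from the group-theoretic recipe of Rapoport--Richartz, using the explicit form of the cocharacter $\mu$ attached to the Kottwitz datum $\mathcal U$ with $\mathbf G(\mathbf R) \cong \GU(a,b)$. First I would recall that, since $\ell$ is inert in the imaginary quadratic field $F$ (the case already reduced to in the previous section), the local group $\mathbf G_{\mathbf Q_\ell}$ is, up to the similitude factor, a unitary group attached to the unramified quadratic extension $F_\lambda/\mathbf Q_\ell$, and the Hodge cocharacter $\mu$ has weights $0$ and $1$ with multiplicities governed by the signature $(a,b)$ at the two archimedean places (equivalently, the two embeddings $F \hookrightarrow \mathbf C$). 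The $\mu$-ordinary Newton point is then the Galois-average (the ``$\sigma$-average'' over $\Gal(F_\lambda/\mathbf Q_\ell)$, here of order $2$) of the dominant representative of $\mu$, by \cite[Th.~1.6.2 and \S1]{Wed} / \cite{RapRic}.

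The key computational step: the Hodge filtration on the relevant $2(a+b)r$-dimensional space has the $0$-part and $1$-part swapped by the nontrivial element of $\Gal(F_\lambda/\mathbf Q_\ell)$, exactly as the decomposition \eqref{eq hodge bundle decomposition} and the Verschiebung recipe \eqref{eq ver permutes a b} already display on the Hodge bundle: $\Omega_a$ has rank $a$ and $\Omega_b$ rank $b$, with $r = \rank_F B$ copies of each, so the total $p$-divisible group has height $2(a+b)r$ and dimension $(a+b)r$. Averaging the cocharacter over the two embeddings produces a slope $0$ on the ``$a$ meets $0$'' overlap, slope $1$ on the ``$a$ meets $1$'' overlap, and slope $1/2$ on the remaining $b-a$ directions where a $0$ gets averaged with a $1$; counting with the $2r$ from the two sides of the unitary space and the $r$-fold multiplicity gives multiplicities $2ar$, $2(b-a)r$, $2ar$ respectively for slopes $0$, $1/2$, $1$. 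I would sanity-check that the multiplicities sum to $2(a+b)r$ (the height), that the symmetry slope $s \leftrightarrow 1-s$ of the Newton polygon of an abelian variety is respected (it is: $2ar$ at $0$ matches $2ar$ at $1$), and that the polygon is the lowest one occurring, i.e.\ generic, consistent with Wedhorn's density theorem.

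The main obstacle I anticipate is purely bookkeeping rather than conceptual: matching the normalizations in \cite{RapRic}, \cite{Wed} and \cite{WushiGalrepshldsI} for the cocharacter $\mu$ (which of $0,1$ corresponds to $\Omega_a$ versus $\Omega_b$, and whether one works with $\mu$ or its dual/inverse), so that the factor $b-a \geq 0$ (recall we assumed $a \le b$) comes out with the correct sign and the multiplicities are non-negative as stated. Once the dictionary between the $\GU(a,b)$ signature, the inert prime $\ell$, and the Kottwitz--Rapoport--Richartz $\mu$-ordinary Newton point is pinned down, the lemma follows by the displayed count; I would present it by first stating the recipe, then exhibiting $\mu$ as an explicit cocharacter of the diagonal torus, then reading off the $\sigma$-averaged slopes.
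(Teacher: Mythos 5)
Your plan is correct and essentially unwinds the references the paper relies on rather than giving a genuinely different proof. The paper's argument is a two-line citation: Wedhorn's Example~2.3.2 handles $r=1$, and Moonen's results (1.3.1 and 3.2.9 of his Serre--Tate paper) reduce general $r$ to that case. The computation you propose --- realizing the $\mu$-ordinary Newton point as the $\sigma$-average of the Shimura cocharacter over $\Gal(F_\lambda/\mathbf Q_\ell)$ and reading off slopes $0$, $1/2$, $1$ with multiplicities $a$, $b-a$, $a$ from overlapping the signatures $(a,b)$ and $(b,a)$ at the two embeddings --- is exactly what Wedhorn's example does, so for $r=1$ you are reproducing his calculation rather than replacing it.

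The one place your write-up is thinner than it should be is the passage to $r>1$. You dispatch it with ``counting with the $2r$\dots\ and the $r$-fold multiplicity,'' but this is not automatic: when $B$ is a nontrivial central simple algebra over $F$, one must justify that the $\mu$-ordinary Newton polygon of the full PEL datum is the $r$-fold dilation of the $r=1$ polygon. The paper invokes Moonen's comparison results precisely for this step; you should either cite them as well or carry out the Morita-equivalence reduction explicitly so that the cocharacter you write down for the diagonal torus genuinely accounts for the $B$-action. Your sanity checks (total multiplicity equals the height $2(a+b)r$, symmetry $s\leftrightarrow 1-s$, minimality of the polygon) are the right ones, and the normalization issue you flag --- which of the Hodge weights $0,1$ is attached to $\Omega_{\mathcal K\ul,a}$ versus $\Omega_{\mathcal K\ul,b}$ --- is indeed the only real bookkeeping hazard once the $r>1$ reduction is in place.
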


\begin{proof} The case $r=1$ follows from \cite[2.3.2]{Wed}. The case of general $r$ follows subsequently from \cite[1.3.1 and 3.2.9]{MoonenSerreTate}.
\end{proof}
\begin{prop} The $\mu$-ordinary locus is the maximal $\ell$-rank stratum of $sh_{\mathcal K \ul, \lambda}$. \label{cor mu ord max p rank}
\end{prop}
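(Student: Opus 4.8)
The plan is to identify the $\mu$-ordinary locus with the locus where the $\ell$-rank is maximal by comparing Newton polygons. Recall that for a PEL abelian scheme over a point of $sh_{\mathcal K\ul, \lambda}$, the $\ell$-rank (more precisely, the rank of the étale part of the $\ell$-divisible group with its extra structure) is controlled by the multiplicity of slope $0$ in the Newton polygon, and the slope symmetry forced by the polarization ties this to the multiplicity of slope $1$. By Lemma~\ref{lemma simpleNP}, the $\mu$-ordinary Newton polygon $\mathcal N^{\rm ord}$ has slope $0$ with multiplicity $2ar$, which is the largest possible multiplicity of slope $0$ among all Newton polygons occurring on $sh_{\mathcal K\ul, \lambda}$: indeed, since $a \le b$, the Hodge polygon of the relevant part of the $\ell$-divisible group constrains the slope-$0$ multiplicity to be at most $2ar$ (equivalently, slope $1$ at most $2ar$, and slope $1/2$ at least $2(b-a)r$), and this bound is achieved precisely by $\mathcal N^{\rm ord}$.

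First I would make precise the stratification of $sh_{\mathcal K\ul, \lambda}$ by $\ell$-rank and recall that, for abelian varieties with the PEL structure of type $\GU(a,b)$ with $a < b$, the Newton polygon is determined up to the relevant equivalence by the slope-$0$ multiplicity together with polarization-symmetry: the possible polygons form a chain, with $\mathcal N^{\rm ord}$ at the top and the supersingular-type polygon (slopes all $1/2$, no slope $0$) at the bottom. This uses Wedhorn's classification of Newton strata on these Shimura varieties (\cite{Wed}) together with the Moonen--Serre-Tate reduction to $r=1$ already invoked in the proof of Lemma~\ref{lemma simpleNP}. Second, I would invoke Wedhorn's theorem that the $\mu$-ordinary locus is open and dense (\cite[\Th 1.6.2]{Wed}), so the $\mu$-ordinary stratum is the unique maximal Newton stratum; combined with the monotonicity of $\ell$-rank along Newton strata (higher Newton polygon $\Leftrightarrow$ smaller slope-$0$ multiplicity $\Leftrightarrow$ smaller $\ell$-rank), this forces the $\mu$-ordinary stratum to coincide with the stratum of maximal $\ell$-rank.

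The one subtlety — and the step I expect to be the main obstacle — is checking that the $\ell$-rank is a \emph{complete} invariant for the Newton stratification in this specific $\GU(a,b)$ setting, i.e. that no two distinct Newton strata share the same $\ell$-rank. In general the $\ell$-rank is far coarser than the Newton polygon, but for the $\ell$-divisible group attached to a $\GU(a,b)$-datum with local structure of the type described in Lemma~\ref{lemma simpleNP}, the polygon has at most three slopes $\{0, 1/2, 1\}$ with the slope-$1$ multiplicity equal to the slope-$0$ multiplicity by polarization symmetry, so the entire polygon is pinned down by the single number ``multiplicity of slope $0$'' $= \ell$-rank (up to the normalization matching the two copies of $F$ under $\lambda$ inert). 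I would spell this out by writing the Hodge filtration type at $\lambda$ and running the Mazur-type inequality, concluding that each admissible Newton polygon is uniquely $\bigl(\tfrac{k}{2},\, 2(b-a)r + \text{(corrections)},\, \tfrac{k}{2}\bigr)$-shaped for $0 \le k \le 2ar$, hence determined by $k$. Granting this, the proposition follows: the maximal-$\ell$-rank locus is the open dense stratum, which is the $\mu$-ordinary locus by Wedhorn.
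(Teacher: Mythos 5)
Your proposal has the right ingredients — the identification of $\ell$-rank with the multiplicity of slope $0$, the polarization symmetry equating the multiplicities of slopes $0$ and $1$, the Rapoport--Richartz ordering of Newton polygons, and Lemma~\ref{lemma simpleNP} — but it goes astray at the step you yourself flagged as ``the main obstacle.'' You try to show that $\ell$-rank is a \emph{complete} invariant of the Newton stratification by asserting that every admissible Newton polygon on $sh_{\mathcal K\ul,\lambda}$ has slopes contained in $\{0,1/2,1\}$, so that the slope-$0$ multiplicity pins down the whole polygon. That assertion is false: it is a special feature of the $\mu$-ordinary polygon $\mathcal N^{\rm ord}$ itself, not of the other Newton strata, which can and do involve intermediate slopes. (And precisely because this special shape belongs to the \emph{lowest} polygon, it is the key to the argument rather than a generic fact.) As a result you are trying to prove something both stronger than what the proposition needs and not justified by what you write.

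The paper's argument is more economical and avoids this trap. By Rapoport--Richartz, every Newton polygon $\mathcal N(A)$ occurring lies on or above $\mathcal N^{\rm ord}$. Now suppose $A$ has $\ell$-rank $\geq 2ar$, i.e.\ $\mathcal N(A)$ has slope $0$ with multiplicity $m \geq 2ar$. If $m > 2ar$ strictly, then $\mathcal N(A)$ lies on the $x$-axis past $x=2ar$, where $\mathcal N^{\rm ord}$ has already begun rising with slope $1/2$; this contradicts $\mathcal N(A)\geq\mathcal N^{\rm ord}$. So $m = 2ar$, and by symmetry the slope-$1$ multiplicity is also $2ar$. On the middle interval $[2ar, 2br]$, $\mathcal N(A)$ is a convex curve joining the same two endpoints as the slope-$1/2$ segment of $\mathcal N^{\rm ord}$; convexity forces it to lie on or \emph{below} that chord, while Rapoport--Richartz forces it to lie on or above, so $\mathcal N(A)=\mathcal N^{\rm ord}$. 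This proves the maximal-$\ell$-rank locus is exactly the $\mu$-ordinary stratum without any claim about the slopes appearing in other strata. You should replace your ``complete invariant'' step with this convexity pinch; the rest of your outline then goes through.
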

\begin{proof} The key point is that, by \cite[Prop. 2.4(iv) and \Th 4.2]{RapRic}, the Newton polygon $\mathcal N^{\rm ord}$ described in Lemma~\ref{lemma simpleNP} is the lowest among the Newton polygons of the underlying isogeny classes of abelian schemes corresponding to geometric points of $sh_{\mathcal K\ul, \lambda}$. Let $A$ be an abelian scheme with Newton polygon $\mathcal N(A)$. Then $\mathcal N(A)$ is symmetric and the $\ell$-rank of $A$ is the multiplicity of 0 (=the multiplicity of 1) as a slope of $\mathcal N(A)$.  But if the multiplicity of 0 in $\mathcal N(A)$ is at least the multiplicity of 0 in $\mathcal N^{\rm ord}$ and $\mathcal N(A)$ lies on or above $\mathcal N^{\rm ord}$, then by Lemma~\ref{lemma simpleNP} we must have $\mathcal N(A)=\mathcal N^{\ord}$.
\end{proof}
\begin{cor} The maximal $\ell$-rank stratum of $sh_{\mathcal K \ul, \lambda}$ has $\ell$-rank $2ar$. \label{cor max p rank 2ar} \end{cor}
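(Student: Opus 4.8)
The plan is straightforward: Corollary~\ref{cor max p rank 2ar} is an immediate consequence of Proposition~\ref{cor mu ord max p rank} combined with Lemma~\ref{lemma simpleNP}, so the proof amounts to reading off a number from data already established. By Proposition~\ref{cor mu ord max p rank}, the maximal $\ell$-rank stratum of $sh_{\mathcal K\ul, \lambda}$ coincides with the $\mu$-ordinary locus; hence the $\ell$-rank along this stratum is the $\ell$-rank of a $\mu$-ordinary geometric point, i.e. the $\ell$-rank attached to the Newton polygon $\mathcal N^{\rm ord}$ of Lemma~\ref{lemma simpleNP}.

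First I would recall, as in the proof of Proposition~\ref{cor mu ord max p rank}, that for a polarized abelian scheme $A$ the $\ell$-rank equals the multiplicity of the slope $0$ in $\mathcal N(A)$ (equivalently, by symmetry of the Newton polygon, the multiplicity of the slope $1$). Then I would simply invoke Lemma~\ref{lemma simpleNP}, which records that $\mathcal N^{\rm ord}$ has slope $0$ with multiplicity $2ar$ (and, consistently, slope $1$ with the same multiplicity $2ar$). Combining these two facts gives that the $\ell$-rank of any point in the maximal $\ell$-rank stratum is exactly $2ar$, which is the assertion.

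There is essentially no obstacle here: the only thing to be careful about is bookkeeping, namely that the ``multiplicity of $0$'' referred to in the $\ell$-rank formula is literally the first entry in the multiplicity column of Lemma~\ref{lemma simpleNP}, and that the total dimension $2ar + 2(b-a)r + 2ar = 2(a+b)r$ matches $2\dim_F(V)\cdot[F:\mathbf Q]^{-1}$ as a sanity check on the normalization of the Newton polygon. Once this is noted, the statement follows at once.

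\begin{proof}
By Proposition~\ref{cor mu ord max p rank}, the maximal $\ell$-rank stratum of $sh_{\mathcal K\ul, \lambda}$ is the $\mu$-ordinary locus. As recalled in the proof of that proposition, for a polarized abelian scheme $A$ the $\ell$-rank equals the multiplicity of $0$ (equivalently, by symmetry, of $1$) as a slope of $\mathcal N(A)$. Applying this to a $\mu$-ordinary geometric point, whose Newton polygon is $\mathcal N^{\rm ord}$, and reading off the multiplicity of the slope $0$ from Lemma~\ref{lemma simpleNP}, we conclude that the $\ell$-rank along this stratum is $2ar$.
\end{proof}
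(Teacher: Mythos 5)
Your proof is correct and follows essentially the same route as the paper, which simply states that the corollary ``follows directly from Lemma~\ref{lemma simpleNP} and the proof of Prop.~\ref{cor mu ord max p rank}''; you have merely unpacked that one-line citation into its constituent steps (identify the maximal $\ell$-rank stratum with the $\mu$-ordinary locus, recall that $\ell$-rank is the multiplicity of the slope $0$, and read off $2ar$ from the lemma). The dimension sanity check is a harmless addition and does not alter the argument.
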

\begin{proof} This follows directly from Lemma~\ref{lemma simpleNP} and the proof of Prop. ~\ref{cor mu ord max p rank}.
\end{proof}

\begin{lem} Suppose $A$ is an abelian scheme which is a representative of the underlying isogeny class of a geometric point of $sh_{\mathcal K, \lambda}$. Then $^\mu\!H(A) \neq 0$ if and only if the $\ell$-rank of $A$ is equal to $2ar$. \label{lemma hasse p rank}\end{lem}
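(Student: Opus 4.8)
The plan is to relate the non-vanishing of $^\mu\!H(A)$ at a geometric point $x$ to the injectivity — equivalently, since source and target are line bundles over a field, the nonzero-ness — of the fiber of the map $^\mu\!h(A)$ at $x$, and then to express that map in terms of Dieudonné theory of the $\ell$-divisible group $A[\ell^\infty]$. By construction $^\mu\!h(A)$ is the top exterior power of $(\operatorname{Ver}^*)^2$ restricted to $\Omega_{\mathcal K\ul,a}$, so $^\mu\!H(A)(x)\neq 0$ if and only if $(\operatorname{Ver}^*)^2_{|\Omega_{\mathcal K\ul,a}}$ is an isomorphism on the fiber at $x$; because $\operatorname{Ver}^*$ swaps the $a$- and $b$-parts as in \eqref{eq ver permutes a b}, this is equivalent to $\operatorname{Ver}^*$ itself being injective on $\Omega_{\mathcal K\ul,a}(x)$. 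So the first step is: $^\mu\!H(A)(x)\neq 0 \iff \operatorname{Ver}^*$ is injective on the $a$-part of $\Omega$, i.e. $\operatorname{Ver}$ is "as nondegenerate as possible" on the relevant piece of the Dieudonné module.

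The second step translates this into the Newton/Hodge combinatorics. Over the perfect residue field of $x$, write $M$ for the (contravariant) Dieudonné module of $A[\ell^\infty]$ with its $F$-action; the CM type decomposes $M=M_a\oplus M_b$ with $\operatorname{Frob}$ (and $\operatorname{Ver}$) interchanging the two pieces, and $\Omega_{\mathcal K\ul,a}(x)=M_a/VM$ in the usual identification of the Hodge filtration with $M/VM$ (restricted to the $a$-isotypic part). The condition that $\operatorname{Ver}^*$ be injective on $M_a/VM$, unwound through the two-step composite, is exactly the condition that the $\ell$-rank — the étale part of $A[\ell^\infty]$, whose Dieudonné module is the maximal submodule on which $F$ is bijective — attains the generic value. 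Concretely: the $\ell$-rank equals the multiplicity of slope $0$ in $\mathcal N(A)$, and one checks that $(\operatorname{Ver}^*)^2$ is an isomorphism on the $a$-part precisely when this multiplicity is maximal, i.e. equal to $2ar$ by Corollary~\ref{cor max p rank 2ar}. This is really a local computation with the $p$-divisible group of type $\GU(a,b)$: the minimal Hodge slopes force the $a$-part of the Dieudonné module to be entirely "slope $0$ or slope $1$" in the $\mu$-ordinary case, making Verschiebung bijective there, whereas any higher Newton polygon introduces slope-$1/2$ (or other non-integral) contributions into the $a$-part on which $(\operatorname{Ver}^*)^2$ necessarily kills a line.

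Thus the argument has the shape: (i) reduce to a fiberwise linear-algebra statement about $(\operatorname{Ver}^*)^2$ on $\Omega_{\mathcal K\ul,a}$; (ii) pass to Dieudonné modules and reinterpret as a statement about the slope-$0$ part, i.e. the $\ell$-rank; (iii) invoke Lemma~\ref{lemma simpleNP}, Proposition~\ref{cor mu ord max p rank} and Corollary~\ref{cor max p rank 2ar} to identify "$(\operatorname{Ver}^*)^2$ an isomorphism on the $a$-part" with "$\ell$-rank $=2ar$"; one direction (maximal $\ell$-rank $\Rightarrow$ $\mu$-ordinary $\Rightarrow$ Verschiebung bijective on the $a$-part) is the content of the cited structure of $\mathcal N^{\rm ord}$, and the other (non-maximal $\ell$-rank $\Rightarrow$ $(\operatorname{Ver}^*)^2$ degenerate) follows since a drop in $\ell$-rank forces $\dim\ker\big((\operatorname{Ver}^*)^2_{|\Omega_{\mathcal K\ul,a}}(x)\big)>0$.

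I expect the main obstacle to be step (ii): making the dictionary between $\operatorname{Ver}^*$ on the $a$-component of the Hodge bundle and the slope-$0$ part of the Dieudonné module fully rigorous in the inert case, keeping careful track of how the $\mathcal O_F\otimes\mathcal O_{E_\lambda}$-action (with $\ell$ inert) splits the Dieudonné module into two Frobenius-conjugate pieces of ranks $a$ and $b$, and checking that "$(\operatorname{Ver}^*)^2$ is an isomorphism on the rank-$a$ piece of $M/VM$" is genuinely equivalent to "the $a$-piece of $M$ contains no slope-$1/2$ part" rather than merely implied by it. Once the linear algebra of this two-step Frobenius-twisted map over $\overline{\mathbb F}_\ell$ is set up correctly, the combinatorial input from Lemma~\ref{lemma simpleNP} does the rest.
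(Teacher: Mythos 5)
Your approach is genuinely different from the paper's, and it contains an error at a critical juncture. The paper does not pass through Dieudonn\'e theory and slope combinatorics at all in this lemma; instead it uses Mumford's results from {\it Abelian Varieties} (\S\S14--15) to identify the $\ell$-rank of $A$ with the \emph{semisimple} rank of $(\operatorname{Ver}^*)^j$ for every $j$, and then observes that $(\operatorname{Ver}^*)^j$ is actually semisimple once $j\geq a+b$. Since $^\mu\!H(A)\neq 0$ at a geometric point if and only if $^\mu\!H(A)^{a+b}\neq 0$ there, and the latter vanishes exactly when $\det\bigl((\operatorname{Ver}^*)^{2(a+b)}|_{\Omega_{\mathcal K\ul,a}}\bigr)$ does, the whole problem collapses to a rank count for the iterate $(\operatorname{Ver}^*)^{2(a+b)}$, whose rank literally \emph{is} the $\ell$-rank. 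The bound rank $\leq 2ar$ then comes from the fact that both restrictions $\operatorname{Ver}^*|_{\Omega_a}$ and $\operatorname{Ver}^*|_{\Omega_b}$ have rank at most $a$ because $a\leq b$, and the decomposition \eqref{eq hodge bundle decomposition} has $r$ copies of each. This is more elementary than your route and avoids any case analysis of Newton strata.

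The concrete error in your write-up is the claim that $(\operatorname{Ver}^*)^2|_{\Omega_{\mathcal K\ul,a}}$ being an isomorphism on the fiber ``is equivalent to $\operatorname{Ver}^*$ itself being injective on $\Omega_{\mathcal K\ul,a}(x)$.'' This is only necessary, not sufficient: $\operatorname{Ver}^*|_{\Omega_a}$ injective makes the image an $a$-dimensional subspace of the rank-$b$ bundle $\Omega_b^{(\ell)}$, but nothing in that alone prevents this subspace from meeting the kernel of $\operatorname{Ver}^{*(\ell)}|_{\Omega_b^{(\ell)}}$, in which case the composite degenerates. This is precisely the issue the paper's $(a+b)$-fold iteration is designed to circumvent: for a single application of $(\operatorname{Ver}^*)^2$ one can only conclude that $^\mu\!H$ is nonzero on (a set containing) the $\mu$-ordinary locus; to get the reverse inclusion one must replace $(\operatorname{Ver}^*)^2$ by a high enough power that it is semisimple, so that its rank is detected by the $\ell$-rank. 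That iteration is the key idea you did not find. Your fallback plan (step (iii): classify the Dieudonn\'e module by Newton stratum and check degeneracy directly) is plausible in principle, but you have not carried it out, and you yourself flag the $\mathcal O_F$-linear bookkeeping in the inert case as the main obstacle --- this is exactly the work the paper's semisimplicity argument lets you avoid.
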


\begin{proof} One has $H^1(A, \mathcal O_A) \cong H^0(A, \Omega^1_A)$ and under this isomorphism the action of Frobenius on $H^1(A, \mathcal O_A)$ corresponds to that of Verschiebung on $H^0(A, \Omega^1_A)$. Hence \cite[\S15]{MumAbVar} implies that the $\ell$-rank of $A$ equals the semisimple rank of $({\rm Ver}^*)^j:\Omega \rightarrow \Omega^{(\ell^j)}$  for all $j \in \mathbf N$. Since $\dim A=(a+b)r$, keeping in mind~\eqref{eq hodge bundle decomposition} and using the last corollary of \S14 of \loccit, $({\rm Ver}^*)^j$ is semisimple for $j \geq a+b$. Therefore the $\ell$-rank of $A$ equals the rank of $({\rm Ver}^*)^j$ for $j \geq a+b$. We take the $\otimes (a+b)$ iterate of the section $^\mu\!H(A)$, see (\ref{eq mu hasse map}). It is clear that  $^\mu\!H(A) \neq 0$ if and only if  $^\mu\!H(A)^n \neq 0$ for any $n \in \mathbf LN_{>0},$ in particular for $n=a+b$.

Since $a \leq b$, both ${\rm Ver}^*_{|\Omega_{\mathcal K\ul,a}}$ and ${\rm Ver}^*_{|\Omega_{\mathcal K\ul,b}}$ have rank at most $a$. So also $({\rm Ver}^*)^j_{|\Omega_{\mathcal K\ul,a}}$ and $({\rm Ver}^*)^j_{|\Omega_{\mathcal K\ul,b}}$ each have rank at most $a$. By ~\eqref{eq hodge bundle decomposition}, $({\rm Ver^*})^j$ has rank at most $2ar$.

The $\ell$-rank of $A$ equals $2ar$ if and only if the rank of $({\rm Ver}^*)^j$ is $2ar$ for $j \geq a+b$. In turn, the rank of $({\rm Ver}^*)^j$ is $2ar$ if and only if both ${\rm Ver}^*_{|\Omega_{\mathcal K\ul,a}}$ and ${\rm Ver}^*_{|\Omega_{\mathcal K\ul,b}}$ have rank $a$. Since $\Omega_{\mathcal K\ul,a}$ and $\Omega_{\mathcal K\ul,a}^{(\ell^{2(a+b)})}$ are rank $a$ vector bundles, the determinant of a map between them is nonzero if and only if it has rank $a$.
\end{proof}
We now conclude the proof of \Th~\ref{th mu ordinary hasse invariant}.
\begin{proof}[Proof of {\rm \Th~\ref{th mu ordinary hasse invariant}:}]
Combining Prop. ~\ref{cor mu ord max p rank}, \Cor~\ref{cor max p rank 2ar} and Lemma~\ref{lemma hasse p rank} gives ($\mu$-Ha1). By \cite[Prop. 7.14]{LanSuhgen} (or \cite{LanSuhcpt} in the compact case), there exists $k \in \mathbf N$ such that $\omega_{\mathcal K\ul, a}^{\otimes(k(\ell^{2}-1))}$ extends to an ample line bundle on the minimal compactification $sh_{\mathcal K\ul, \lambda}^{\rm min}$. Given this ampleness result, the existence of a lift of some power of $^\mu\!H$ follows by a well-known cohomological argument coupled with the Koecher principle (cf. \cite[Lemma 4.4.1]{WushiGalrepshldsI}). Thus ($\mu$-Ha2) is established. Finally ($\mu$-Ha3) is proved in the same way as \Th 4.2.4 of \loccit . \end{proof}

Next we note that \Cor~\ref{cor affine} is an immediate consequence of \Th~\ref{th mu ordinary hasse invariant}:

\begin{proof}[Proof of {\rm \Cor~\ref{cor affine}:}]
The nonvanishing locus of a section of an ample line bundle on a projective scheme is affine.
\end{proof}
Finally we note how \Th~\ref{th main unitary 1} follows from \Th~\ref{th mu ordinary hasse invariant}.
\begin{proof}[Proof of {\rm \Th~\ref{th main unitary 1}:}] The proof is analogous to the proof of \cite[\Th 1.2]{WushiGalrepshldsI}: Let $\mathcal K \subset \mathbf G(\mathbf A_f)$ be an open compact subgroup such that $\mathcal K=\mathcal K\ul\mathcal K_{(\ell)}$ with $\mathcal K\ul \subset \mathbf G(\mathbf A_f^\ell)$ and $\mathcal K_{(\ell)} \subset \mathbf G(\mathbf Z_{\ell})$. Let $Sh_{\mathcal K, E}$ be the model of $\shgx$ at level $\mathcal K$ over $E$ as defined in \S3.2 of \loccit. Let $Sh_{\mathcal K, \ell}$ be the normalization of $Sh_{\mathcal K\ul, \ell}$ in $Sh_{\mathcal K, E}$. Let ${\rm \pi}:Sh_{\mathcal K, \ell} \rightarrow Sh_{\mathcal K\ul, \ell}$ be the natural projection.

Using ($\mu$-Ha2), let $^\mu\!H_{\mathcal K\ul}^{\rm lift}$ be a lift of a power $^\mu\!H$ and let $^\mu\!H_{\mathcal K}^{\rm lift}$ be the pullback of $^\mu\!H_{\mathcal K\ul}^{\rm lift}$ to $Sh_{\mathcal K,\ell}$ along the projection $\pi$.

Let $\mathcal H=\mathcal H_{\mathcal P \ul}(\mathbf G, \mathbf Z_\ell)$ be the spherical Hecke algebra of $\mathbf G$ with values in $\mathbf Z_\ell$, trivial at places outside $\mathcal P\ul$ (see \S6.1 of \loccit\  for a more detailed definition).
\begin{thm} \label{th mu hasse commutes with hecke}
 Suppose $\mathcal V$ is an automorphic vector bundle on $Sh_{\mathcal K, \ell}$ and $f \in H^0(Sh_{\mathcal K, \ell}, \mathcal V)$ is nonzero modulo $\lambda$. Then for all $j \in \mathbf N$, the product $(^\mu\!H_{\mathcal K}^{\rm lift})^{\ell^j}f \in H^0(Sh_{\mathcal K, \ell}, \omega_{\mathcal K, a}^{\otimes (\ell^{2}-1)\ell^j} \otimes \mathcal V)$ is nonzero modulo $\lambda$ and satisfies \begin{equation} T((^\mu\!H_{\mathcal K}^{\rm lift})^{\ell^j}f) \equiv (^\mu\!H_{\mathcal K}^{\rm lift})^{\ell^j}T(f) \pmod{\lambda^{j+1}} \mbox{ for all } T \in \calH. \label{eq mu hasse commutes with hecke} \end{equation} \end{thm}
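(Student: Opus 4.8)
The plan is to follow the classical "Hasse invariant commutes with Hecke at good primes" argument of Serre and Katz, as adapted to Shimura varieties in \cite[\S6]{WushiGalrepshldsI}. The two assertions of the theorem are separate in nature: nonvanishing of $(^\mu\!H_{\mathcal K}^{\rm lift})^{\ell^j}f$ modulo $\lambda$ is a direct consequence of ($\mu$-Ha1), while the congruence \eqref{eq mu hasse commutes with hecke} requires the mod-$\lambda^{j+1}$ analysis of the Hecke correspondence.

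First I would dispose of the nonvanishing claim. Since $^\mu\!H_{\mathcal K}^{\rm lift}$ is a lift of a power of $^\mu\!H$, its reduction modulo $\lambda$ is (a power of) the pullback of $^\mu\!H$ under $\pi$, whose nonvanishing locus, by ($\mu$-Ha1) and Proposition~\ref{cor mu ord max p rank}, is exactly the $\mu$-ordinary locus of $sh_{\mathcal K\ul,\lambda}$; this locus is open and dense by Wedhorn's theorem. As $f$ is nonzero modulo $\lambda$, its reduction $\bar f$ is a nonzero section of $\mathcal V$ over the special fiber; since $sh_{\mathcal K,\lambda}$ is irreducible (or at worst has the $\mu$-ordinary locus meeting every component) and the $\mu$-ordinary locus is dense, the product $(\overline{^\mu\!H_{\mathcal K}^{\rm lift}})^{\ell^j}\bar f$ is not identically zero, because the vanishing locus of the first factor is nowhere dense and $\bar f$ does not vanish on a dense open. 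Hence $(^\mu\!H_{\mathcal K}^{\rm lift})^{\ell^j}f$ is nonzero modulo $\lambda$.

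For the congruence I would argue by induction on $j$, the case $j=0$ being trivial since $T$ is $\mathcal O_E/\lambda$-linear. The engine is the identity $(^\mu\!H_{\mathcal K}^{\rm lift})^{\ell} \equiv (^\mu\!H_{\mathcal K}^{\rm lift, \sigma})$ or, more precisely, the fact that modulo $\lambda$ the section $^\mu\!H_{\mathcal K}^{\rm lift}$ is built from Verschiebung, hence its $\ell$-th power is, up to the relevant Frobenius twist, compatible with the two projections $p_1, p_2: C \to Sh_{\mathcal K,\ell}$ defining a prime-to-$\ell$ Hecke correspondence. Concretely, for $T = [\mathcal K\ul g \mathcal K\ul]$ with $g$ prime to $\ell$, the correspondence is étale at $\ell$, the universal isogeny $\varphi: p_1^*\mathcal A \to p_2^*\mathcal A$ is prime to $\ell$, and by Lemma 4.2.3 of \loccit\ (compatibility of Verschiebung with isogenies, already invoked in the construction of $^\mu\!H$) one gets $p_1^*({}^\mu\!H^{\rm lift}) \equiv p_2^*({}^\mu\!H^{\rm lift})$ modulo $\lambda$ — not modulo higher powers, but that single congruence is all that is needed. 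Writing $T(hf) = p_{2,*}(p_1^*(hf))$ for $h = (^\mu\!H_{\mathcal K}^{\rm lift})^{\ell^j}$, and using $p_1^* h = p_2^* h + \lambda h'$ for some section $h'$ over $C$ together with the inductive hypothesis lifted to $C$, one obtains $T(hf) = p_{2,*}(p_2^*h \cdot p_1^* f) + \lambda\, p_{2,*}(h' p_1^* f)$. The first term is $h \cdot T(f)$ by the projection formula; the delicate point is to bound the $\lambda$-adic valuation of the error term $p_{2,*}(h' p_1^* f)$, and this is where one uses that $h = (^\mu\!H_{\mathcal K}^{\rm lift})^{\ell^j}$ is an $\ell^j$-th power: modulo $\lambda$ it is an $\ell^j$-th power of a Verschiebung-built section, so $h' $ is itself divisible by $\lambda^j$ after the appropriate Frobenius-twist bookkeeping — equivalently, one writes $p_1^* h - p_2^* h = ((p_2^* u + \lambda v)^{\ell^j} - (p_2^* u)^{\ell^j})$ with $u = {}^\mu\!H_{\mathcal K}^{\rm lift}$ and expands the binomial, every nonzero term of which is divisible by $\lambda^{j}$ (since $\binom{\ell^j}{k}$ is divisible by $\ell$ for $0 < k < \ell^j$, pushing the divisibility from $\lambda v$ up by successive factors), so that the error term lies in $\lambda^{j+1}$ times a section and \eqref{eq mu hasse commutes with hecke} follows.

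The main obstacle is precisely this last valuation estimate: one must check that the binomial expansion of $(p_2^*u + \lambda v)^{\ell^j}$, after pushing forward along $p_2$, really does contribute nothing modulo $\lambda^{j+1}$ beyond $h\cdot T(f)$ — i.e. that all the cross terms $\binom{\ell^j}{k}\lambda^k v^k (p_2^*u)^{\ell^j-k}$ for $1 \le k \le \ell^j$ have $\lambda$-valuation at least $j+1$. For $k \ge j+1$ this is automatic; for $1 \le k \le j$ one needs $\val_\lambda\binom{\ell^j}{k} \ge j+1-k$, which holds because $\val_\ell\binom{\ell^j}{k} = j - \val_\ell(k) \ge j - (k-1) = j+1-k$ (as $\val_\ell(k) \le k-1$ for $k \ge 1$), and $\lambda$ is unramified over $\ell$ so $\val_\lambda = \val_\ell$ on $\mathbf Z_\ell$. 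This combinatorial lemma, combined with the étaleness of the prime-to-$\ell$ Hecke correspondence at $\lambda$, the projection formula, and the single mod-$\lambda$ congruence $p_1^*({}^\mu\!H^{\rm lift}) \equiv p_2^*({}^\mu\!H^{\rm lift})$ supplied by Lemma 4.2.3 of \loccit, completes the argument exactly as in \cite[\S6]{WushiGalrepshldsI}, the only new input being that the Verschiebung-built section here is ${}^\mu\!H$ rather than the classical ordinary Hasse invariant.
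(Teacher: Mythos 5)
Your proposal is correct and follows essentially the same route as the paper, which simply cites \cite[\Th 6.2.1]{WushiGalrepshldsI} for the Serre--Katz binomial estimate that you spell out and invokes ($\mu$-Ha3) for the mod-$\lambda$ congruence $p_1^*({}^\mu\!H^{\rm lift}) \equiv p_2^*({}^\mu\!H^{\rm lift})$. One minor slip: the $j=0$ case is not ``trivial since $T$ is $\mathcal O_E/\lambda$-linear'' --- it already needs the mod-$\lambda$ Hecke-equivariance of ${}^\mu\!H^{\rm lift}$ supplied by ($\mu$-Ha3) --- but this does no harm since your valuation computation actually handles every $j$ uniformly and no induction is needed.
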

 \begin{proof} Since the $\mu$-ordinary locus is dense \cite[\Th 1.6.2]{Wed}, the product $(^\mu\!H_{\mathcal K}^{\rm lift})^{\ell^j}f$ is nonzero modulo $\lambda$ by ($\mu$-Ha1). As in the proof of \cite[\Th 6.2.1]{WushiGalrepshldsI}, ($\mu$-Ha3) implies~\eqref{eq mu hasse commutes with hecke}. \end{proof}
 Given \Th~\ref{th mu hasse commutes with hecke}, the remainder of the argument to establish \Th~\ref{th main unitary 1} is identical to \S\S6.3-6.4 of \loccit.
\end{proof}

\begin{rmk} A tremendous advantage of our $\mu$-ordinary Hasse invariant is that it satisfies all key properties of the classical invariant. Its applications will thus follow the classical blueprint: to Galois representations (as we illustrated briefly above), but also immediately the (non-effective) existence of its canonical subgroup thanks to the elementary \cite[Prop.3]{Fargues}, and thus also applications to explicit constructions of eigenvarieties, etc.

\end{rmk}

\section{Acknowledgments}

We thank the Max-Planck-Institut fŸr Mathematik for a year-long membership in 2011 (M.-H. N.) and also for a short visit in May 2011 (W.G.). In particular, the natural albeit key idea of considering higher powers of Verschiebung occurred to M.-H. N. on his very first Monday at MPIM.

W.G. thanks Pierre Deligne, Richard Taylor, Barry Mazur, Elena Mantovan, David Geraghty, Beno\^\i t Stroh, Jacques Tilouine and Vincent Pilloni for helpful conversations. W. G. is happy to acknowledge support from 10 BLAN 114 01 ANR ARSHIFO, a Simons Travel Grant and NSF MSPRF.

\bibliographystyle{amsalpha}
\bibliography{galrepsholodsunitary}
\end{document}